\theoremstyle{plain}
\newtheorem{Theorem}{Theorem}
\newtheorem{Lemma}[Theorem]{Lemma}
\newtheorem{Proposition}[Theorem]{Proposition}
\newtheorem{Conjecture}[Theorem]{Conjecture}
\theoremstyle{definition}
\newtheorem{Definition}[Theorem]{Definition}
\newtheorem{Remark}[Theorem]{Remark}
\newtheorem{Example}[Theorem]{Example}
\crefname{Theorem}{Theorem}{Theorems}
\crefname{Proposition}{Proposition}{Propositions}
\begin{document}
\title[Periodicity of an algorithm for $p$--adic continued fractions]{On the periodicity of an algorithm for $p$--adic continued fractions}
\author {Nadir Murru} 
\address{Department of Mathematics, Università degli Studi di Trento}
\email{nadir.murru@unitn.it}
\author{Giuliano Romeo} 
\address{Department of Mathematical Sciences "Giuseppe Luigi Lagrange", Politecnico di Torino}
\email{giuliano.romeo@polito.it}
\author{Giordano Santilli}
\address{Department of Mathematics, Università degli Studi di Trento}
\email{giordano.santilli@unitn.it}

\begin{abstract}
In this paper we study the properties of an algorithm (introduced in \cite{BII}) for generating continued fractions in the field of $p$--adic numbers $\mathbb Q_p$. First of all, we obtain an analogue of the Galois' Theorem for classical continued fractions. Then, we investigate the length of the preperiod for periodic expansions of square roots. Finally, we prove that there exist infinitely many square roots of integers in $\mathbb Q_p$ that have a periodic expansion with period of length four, solving an open problem left by Browkin in \cite{BII}.  
\end{abstract}

\maketitle
\section{Introduction}

Continued fractions are very important objects both from a theoretical and an applicative point of view. First of all, they allow to characterize rational numbers and quadratic irrationals over the real numbers. Indeed, a continued fraction has a finite expansion if and only if it represents a rational number and it has a periodic expansion if and only if it represents a quadratic irrational. Moreover, they provide the best approximations of real numbers and they are used in applied fields like cryptography.
Thus, it has been natural to introduce them over the field of $p$--adic numbers $\mathbb{Q}_p$, with the aim of reproducing all the good properties of the classical continued fractions and exploiting them for deepening the knowledge of the $p$--adic numbers.
Nevertheless, the definition of a good and satisfying $p$--adic continued fraction algorithm is still an open problem, since it is hard to find one interpretation that retrieves all the results that hold in the real case. 
There have been several attempts trying to emulate the standard definition in the field of real numbers (see \cite{BI, BII, RUB, SCH}). The continued fractions introduced in \cite{RUB} and \cite{SCH} can have either a finite or periodic expansion when representing a rational number (see \cite{BUN} and \cite{LAO}). Moreover, the two algorithms are not periodic for all quadratic irrationals (the characterization of the periodicity for these two algorithms can be found, respectively, in \cite{TIL} and \cite{CVZ}). On the contrary, in \cite{BI}, Browkin defined a $p$--adic continued fraction algorithm terminating in a finite number of steps on every rational number and a full characterization for the periodicity of this algorithm is still missing.
This algorithm, that we call \textit{Browkin I}, works as follows. The $p$--adic continued fraction expansion $[b_0, b_1, \ldots]$ of $\alpha_0\in\mathbb{Q}_p$ provided by \textit{Browkin I} is obtained by iterating the following steps

\begin{equation}\label{Br1}
\begin{cases}
b_n=s(\alpha_n)\\
\alpha_{n+1}=\frac{1}{\alpha_n-b_n},
\end{cases} \quad \forall n \geq 0,
\end{equation}
where $s: \mathbb{Q}_p\rightarrow \mathbb{Q}$ is the function that replaces the role of the floor function in the classical continued fractions over $\mathbb R$. For a $p$--adic number $\alpha=\sum\limits_{n=-r}^{+\infty} a_np^n\in\mathbb{Q}_p$, with $r\in\mathbb{Z}$ and $a_n\in \{-\frac{p-1}{2},\ldots,\frac{p-1}{2}\}$, the function $s$ is defined as
\[s(\alpha)=\sum\limits_{n=-r}^{0} a_np^n\in\mathbb{Q}.\]

This continued fraction is very similar to the one defined in \cite{RUB}, but here the representatives are taken in $\{-\frac{p-1}{2},\ldots,\frac{p-1}{2}\}$ instead of the canonical ones $\{0,\ldots, p-1 \}$. In the same paper, Browkin showed that this algorithm terminates in a finite number of steps if and only if $\alpha\in\mathbb{Q}$. 
More than 20 years later, Browkin defined in \cite{BII} another $p$--adic continued fractions algorithm, using a different ``floor function" in combination with the $s$ function, with the aim of obtaining better results on the behaviour of the algorithm over quadratic irrationals. For $\alpha=\sum\limits_{n=-r}^{+\infty} a_np^n\in\mathbb{Q}_p$, with $r\in\mathbb{Z}$ and $a_n\in \{-\frac{p-1}{2},\ldots,\frac{p-1}{2}\}$, the new floor function is defined as
\[ t(\alpha)=\sum\limits_{n=-r}^{-1}a_np^n.\]
The new algorithm proposed by Browkin, which we call \textit{Browkin II}, works as follows: starting from $\alpha_0\in\mathbb{Q}_p$ the partial quotients of the $p$--adic continued fraction expansion are computed for $n \geq 0$ as
\begin{align} 
\begin{cases}\label{Br2}
b_n=s(\alpha_n) \ \ \ \ \ & \textup{if} \ n \ \textup{even}\\
b_n=t(\alpha_n) & \textup{if} \ n \ \textup{odd}\ \textup{and} \ v_p(\alpha_n-t(\alpha_n))= 0\\
b_n=t(\alpha_n)-sign(t(\alpha_n)) & \textup{if} \ n \ \textup{odd} \ \textup{and} \ v_p(\alpha_n-t(\alpha_n))\neq 0\\
\alpha_{n+1}=\frac{1}{\alpha_n-b_n}.
\end{cases}
\end{align}
Barbero, Cerruti and Murru \cite{BCMI} proved that also this second algorithm terminates in a finite number of steps on each rational number. The same authors proved in \cite{BCMII} that this result is still true when the algorithm is performed using the canonical representatives in $\{0,\ldots, p-1 \}$. In \cite{BCMI} and \cite{BII}, it has been observed that \textit{Browkin II} behaves better than \textit{Browkin I} on quadratic irrationals. In particular, it appears to be periodic on more square roots and in general periods are shorter than those of the expansions produced by \textit{Browkin I}. The periodicity properties of \textit{Browkin I} are well-studied (see, e.g., \cite{BCMII, BEI, BEII, CMT, DEA, WANI, WANII,DEWII}), but few considerations have been developed about \textit{Browkin II}, due to the complexity of this algorithm that uses alternately two different functions.  
However, the study of the periodicity of both the two algorithms is fundamental for understanding how to design more performing $p$--adic continued fraction algorithms.
For these reasons, in this paper we are interested in studying the properties of the periodicity of \textit{Browkin II} $p$--adic continued fractions. In Section \ref{sec:perBro}, we first give a necessary condition for the pure periodicity of \textit{Browkin II}, obtaining similar results to the ones proved by Bedocchi \cite{BEI} for \textit{Browkin I}. Moreover, we prove that if $\sqrt{D}$ exists in $\mathbb Q_p$ and has a periodic expansion by means of \emph{Browkin II}, then the preperiod has length either one or even. In Section \ref{sec:perExp}, we prove that there exist infinitely many square roots of integers that have a periodic expansion of period's length four, answering to a problem left open by Browkin \cite{BII}. We also explicitly provide a family of square roots having such a periodic expansion.  

\section{Preliminaries}
Let us denote by $v_p(\cdot)$ and $|\cdot|_p$, the $p$--adic valuation and the $p$--adic norm. From now on, we will always consider $p$ as an odd prime. We denote by $\frac{A_n}{B_n}$, for all $n\in\mathbb{N}$, the convergents of the continued fraction $[b_0,b_1,\ldots]$, which are
\[\frac{A_n}{B_n}=[b_0,b_1,\ldots,b_n]=b_0+\cfrac{1}{b_1+\cfrac{1}{\ddots + \frac{1}{b_n}}}.\]
We also define the following sets
\[J_p=\left\{\frac{a_0}{p^n}\ \middle| \ n\in\mathbb{N}, \ -\frac{p^{n+1}}{2}< a_0 < \frac{p^{n+1}}{2} \right\}=\mathbb{Z}\left[\frac{1}{p}\right]\cap \left(-\frac{p}{2},\frac{p}{2}\right), \]
and
\[K_p=\left\{\frac{a_0}{p^n}\ \middle| \ n\geq 1, \ -\frac{p^n}{2} < a_0 < \frac{p^{n}}{2}\right\}=\mathbb{Z}\left[\frac{1}{p}\right]\cap \left(-\frac{1}{2},\frac{1}{2}\right).\]
The following lemma is due to Bedocchi.
\begin{Lemma}[Lemma 2.2, \cite{BEI}]\label{LemBedo}
For all $a,b\in J_p$, with $a\neq b$, we have $v_p(a-b)\leq 0$.
\end{Lemma}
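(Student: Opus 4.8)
The plan is to argue by reducing both elements to a common power of $p$ in the denominator and then exploiting that the interval defining $J_p$ has length strictly less than $p$. Using the description $J_p=\{a_0/p^n : n\in\mathbb{N},\ -p^{n+1}/2<a_0<p^{n+1}/2\}$, I would write $a=a_0/p^m$ and $b=b_0/p^n$ with $a_0,b_0\in\mathbb{Z}$, and, by symmetry, assume $m\geq n$. Then
\[
a-b=\frac{a_0-p^{m-n}b_0}{p^m}=:\frac{N}{p^m},\qquad N\in\mathbb{Z},
\]
and the hypothesis $a\neq b$ forces $N\neq 0$.

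The heart of the argument is a strict bound on $|N|$. From $|a_0|<p^{m+1}/2$ and $|b_0|<p^{n+1}/2$ one gets $|p^{m-n}b_0|<p^{m+1}/2$, hence $|N|\leq |a_0|+|p^{m-n}b_0|<p^{m+1}$. Since $N$ is a nonzero integer, $p^{v_p(N)}\leq |N|<p^{m+1}$, so $v_p(N)\leq m$. Therefore $v_p(a-b)=v_p(N)-m\leq 0$, which is the claim.

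I do not expect a genuine obstacle here: the only points that require a moment's care are the reduction ``without loss of generality $m\geq n$'' and the fact that the cross term $p^{m-n}b_0$ must be bounded using the interval attached to $b$ (namely $|b_0|<p^{n+1}/2$) rather than the one attached to $a$; both are immediate. If desired, I would add a remark explaining that this is exactly why the normalization by representatives in $\{-(p-1)/2,\dots,(p-1)/2\}$ — equivalently, values of $s$ and $t$ lying in an interval of length strictly less than $p$ — is the right choice: it is precisely the hypothesis preventing two distinct elements of $J_p$ from being $p$-adically close.
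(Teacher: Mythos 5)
Your argument is correct: with $a=a_0/p^m$, $b=b_0/p^n$, $m\geq n$, the bound $|a_0-p^{m-n}b_0|<p^{m+1}/2+p^{m+1}/2=p^{m+1}$ together with $N\neq 0$ does force $v_p(N)\leq m$, hence $v_p(a-b)\leq 0$. Note, however, that the paper does not prove this statement at all: it is imported verbatim from Bedocchi (\cite{BEI}, Lemma 2.2), so the only meaningful comparison is with the paper's proof of the analogous Lemma \ref{ab} for $K_p$. There the authors argue by a case split on the denominators: if the exponents differ, $v_p(a_0-b_0p^{n-m})=0$ gives the result directly; if they coincide, the size bound $|a_0-b_0|<p^n$ forces $v_p(a_0-b_0)<n$. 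Your proof replaces this dichotomy by a single uniform estimate, which is slightly cleaner and makes transparent the real mechanism — $J_p$ lies in an interval of length $<p$, so a nonzero difference cannot be divisible by $p$ once cleared of denominators — exactly the point of your closing remark (indeed, one can phrase it even more briefly: an element of $\mathbb{Z}[1/p]$ with strictly positive valuation is an integer multiple of $p$, hence has absolute value at least $p$, while $|a-b|<p$). The case-split style of the paper's Lemma \ref{ab} buys the sharper conclusion $v_p(a-b)<0$ there, because $K_p$ sits in the shorter interval $(-1/2,1/2)$ and both valuations are negative; your uniform bound specialized to $J_p$ correctly yields only the non-strict inequality $\leq 0$, which is all the statement claims.
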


In \cite{BI}, Browkin showed that the partial quotients generated by the function $s$ always lie in the set $J_p$. 
In the following lemma we prove a similar result for the function $t$ and the set $K_p$.

\begin{Lemma}\label{DisuII}
Let $\alpha=\sum\limits_{n=-r}^{+\infty}a_np^n\in\mathbb{Q}_p$, with $r\in\mathbb{Z}$ and $a_n\in \{-\frac{p-1}{2},\ldots,\frac{p-1}{2} \}$ for all $n\in \mathbb{N}$. Then
\[\Big|t(\alpha)\Big|<\frac{1}{2},\]
where $|\cdot|$ is the Euclidean norm.
\begin{proof}
Using the function $t$ we obtain
\[t(\alpha)=t\Big(\sum\limits_{n=-r}^{+\infty}a_np^n\Big)=\sum\limits_{n=-r}^{-1}a_np^n.\]
If $r\leq 0$ then $|t(\alpha)|=0<\frac{1}{2}$ and the claim holds. When $r > 0$, we have
\begin{align*}
\Big|t(\alpha)\Big|&=\Bigg|\sum\limits_{n=-r}^{-1}a_np^n\Bigg|\leq \frac{p-1}{2}\Bigg|\sum\limits_{n=-r}^{-1}p^n\Bigg|=\\
&=\frac{1}{2} \cdot \frac{(p-1)(1+p+\ldots+p^{r-1})}{p^r}=\\
&=\frac{1}{2}\cdot \frac{p^r-1}{p^r}<\frac{1}{2},
\end{align*}
and the claim is proved.
\end{proof} 
\end{Lemma}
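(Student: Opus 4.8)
The statement is the $t$-analogue of Browkin's observation that $s(\alpha)$ always lies in $J_p$, and here the relevant norm is the ordinary Euclidean absolute value on $\mathbb{Q}$, not the $p$-adic one, so the natural route is a crude but direct estimate of the finite sum defining $t(\alpha)$. The plan is to split according to the sign of $r$. If $r \le 0$, the index set $\{-r,\ldots,-1\}$ of the sum $t(\alpha)=\sum_{n=-r}^{-1}a_np^n$ is empty, so $t(\alpha)=0$ and $|t(\alpha)|=0<\tfrac12$ with nothing further to prove.

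For the main case $r>0$, I would apply the triangle inequality together with the two facts at hand: each digit satisfies $|a_n|\le\tfrac{p-1}{2}$, and each power $p^n$ with $-r\le n\le -1$ is positive. This gives
\[
|t(\alpha)| \;\le\; \sum_{n=-r}^{-1} |a_n|\, p^n \;\le\; \frac{p-1}{2}\sum_{n=-r}^{-1} p^n .
\]
The remaining step is to evaluate the geometric sum $\sum_{n=-r}^{-1} p^n = p^{-r}\,(1+p+\cdots+p^{r-1}) = \dfrac{p^r-1}{p^r(p-1)}$; the factor $p-1$ cancels and one is left with $|t(\alpha)|\le \dfrac{p^r-1}{2p^r}<\dfrac12$, which is the claim.

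There is essentially no obstacle here: the argument is a one-line geometric-series estimate. The only point worth keeping in mind is that the empty-sum case $r\le 0$ must be isolated, since the geometric-series computation tacitly assumes $r\ge 1$. I would also remark that, as $t(\alpha)\in\mathbb{Z}[\tfrac1p]$ is immediate from the definition, the bound $|t(\alpha)|<\tfrac12$ is precisely what is needed to place $t(\alpha)$ in the set $K_p$, the $t$-counterpart of Browkin's $s(\alpha)\in J_p$; thus this lemma supplies the basic digit bound used throughout the subsequent study of \emph{Browkin II}.
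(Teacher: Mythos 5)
Your proposal is correct and follows essentially the same route as the paper: isolate the empty-sum case $r\le 0$, then for $r>0$ bound the digits by $\tfrac{p-1}{2}$, sum the geometric series, and obtain $|t(\alpha)|\le \tfrac{p^r-1}{2p^r}<\tfrac12$. No gaps to report.
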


Now we prove the analogue of Lemma \ref{LemBedo} for the set $K_p$.

\begin{Lemma}\label{ab}
For all $a,b\in K_p$, with $a\neq b$, we have $v_p(a-b)< 0$.
\begin{proof}
Let us write $a=\frac{a_0}{p^n}$ and $b=\frac{b_0}{p^m}$, with $v_p(a_0)=v_p(b_0)=0$. We can notice that $n,m\geq 1$ since $v_p(a)$ and $v_p(b)$ are both negative. If $n\neq m$, we may suppose $n>m$ without loss of generality and we get
\begin{align*}
v_p(a-b)&=v_p\Big(\frac{a_0}{p^n}-\frac{b_0}{p^m} \Big)=v_p\Big(\frac{a_0-b_0p^{n-m}}{p^n}\Big)=\\
&=v_p(a_0-b_0p^{n-m})-v_p(p^n)=(n-m)-n=-m<0.
\end{align*}
If $n=m$, then
\[v_p(a-b)=v_p\Big(\frac{a_0}{p^n}-\frac{b_0}{p^n} \Big)=v_p\Big(\frac{a_0-b_0}{p^n}\Big)=v_p(a_0-b_0)-n.\]
Since $|a_0-b_0|<p^n$, necessarily $v_p(a_0-b_0)<n$, hence
\[v_p(a-b)=v_p(a_0-b_0)-n<0,\]
and this concludes the proof.
\end{proof}
\end{Lemma}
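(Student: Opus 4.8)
The plan is to reduce the statement to an elementary valuation computation by writing each element of $K_p$ in lowest terms. Given $a,b\in K_p$ with $a\neq b$, I would write $a=a_0/p^{n}$ and $b=b_0/p^{m}$ with $v_p(a_0)=v_p(b_0)=0$; since every nonzero element of $K_p$ has strictly negative $p$--adic valuation, this forces $n,m\geq 1$. From here the argument splits according to whether $n=m$.

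If $n\neq m$, say $n>m$, then $a-b=\bigl(a_0-b_0p^{n-m}\bigr)/p^{n}$, and because $n-m\geq 1$ and $p\nmid a_0$ the numerator has $p$--adic valuation $0$; hence $v_p(a-b)=-n<0$, as desired. If $n=m$, then $a-b=(a_0-b_0)/p^{n}$, so $v_p(a-b)=v_p(a_0-b_0)-n$, and it suffices to show $v_p(a_0-b_0)<n$. This is the one place where the definition of $K_p$ (rather than $J_p$) is used: since $a,b\in\bigl(-\tfrac12,\tfrac12\bigr)$ we get $|a_0|,|b_0|<p^{n}/2$, hence $0<|a_0-b_0|<p^{n}$ (the left inequality because $a\neq b$ forces $a_0\neq b_0$), and a nonzero integer of absolute value less than $p^{n}$ cannot be divisible by $p^{n}$, so $v_p(a_0-b_0)<n$.

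I do not anticipate a genuine obstacle: the proof is a short case analysis, and the only subtlety is extracting the \emph{strict} inequality in the case $n=m$ — this is exactly where the narrower interval $\bigl(-\tfrac12,\tfrac12\bigr)$ in the definition of $K_p$ improves on Bedocchi's bound $v_p(a-b)\leq 0$ for $J_p$ in Lemma \ref{LemBedo}. One should also keep in mind the degenerate possibility $a=0$ or $b=0$; it is cleanest to note that such an element is excluded once we insist on the reduced representation with $v_p(a_0)=0$, or else to dispatch it directly since $v_p(\pm b)=-m<0$.
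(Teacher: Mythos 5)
Your proof is correct and follows essentially the same route as the paper: write $a=a_0/p^{n}$, $b=b_0/p^{m}$ in reduced form with $v_p(a_0)=v_p(b_0)=0$ and $n,m\geq 1$, then split into the cases $n\neq m$ and $n=m$, using the bound $|a_0-b_0|<p^{n}$ coming from the definition of $K_p$ in the equal-exponent case (your explicit handling of $a_0\neq b_0$ and of the degenerate element $0\in K_p$ just fills in details the paper leaves implicit). In fact, in the case $n>m$ your computation is slightly more careful than the paper's: since $v_p(a_0)=0$ and $n-m\geq 1$, the numerator $a_0-b_0p^{n-m}$ has valuation $0$, giving $v_p(a-b)=-n$, whereas the paper writes $v_p(a_0-b_0p^{n-m})-n=(n-m)-n=-m$; both values are negative, so the conclusion is unaffected.
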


Lemma \ref{DisuII} and Lemma \ref{ab} allow us to get useful results on the periodicity of \textit{Browkin II}.

\begin{Remark}\label{RemConv}
In \cite{BI} it has been proved that, for \textit{Browkin I}, the valuations of the numerators and the denominators of the convergents are computed as

\begin{equation}
\label{Equ1}
\begin{split}
v_p(A_n)&=v_p(b_0)+v_p(b_1)+\ldots+v_p(b_n),\\
v_p(B_n)&=v_p(b_1)+v_p(b_2)+\ldots+v_p(b_n),
\end{split}
\end{equation}

or, equivalently,
\begin{equation}
\label{Equ2}
\begin{split}
|A_n|_p&=|b_0|_p|b_1|_p\ldots|b_n|_p,\\
|B_n|_p&=|b_1|_p|b_2|_p\ldots|b_n|_p.
\end{split}
\end{equation}

This result is proved by induction using the fact that, for all $n\geq 0$,
\[v_p(b_{n+2}B_{n+1})<v_p(B_n).\]
The latter condition is true also for \textit{Browkin II}, (see \cite{BII}, Lemma 1) and equations (\ref{Equ1}) and (\ref{Equ2}) also hold for \textit{Browkin II}.
\end{Remark}

\section{Periodicity of Browkin's second algorithm}
\label{sec:perBro}
Bedocchi \cite{BEI, BEII} was able to provide some results on the periodicity of Browkin I. First of all he obtained an analogue of the Galois' Theorem for classical continued fractions, which provide a characterization of the pure periodicity by means of reduced quadratic irrational (see, e. g., \cite{Olds}). 
Then he focused on the length of the preperiod and the period for square roots of integers. In particular, he proved that if $\sqrt{D}$, with $D$ integer, has a periodic expansion by means of \textit{Browkin I}, then the preperiod must have length two and the period can not have length one. 

In this section we deepen the study of \textit{Browkin II}. In particular, we obtain an analogue of the Galois' Theorem and we prove that if $\sqrt{D}$, with $D$ integer, has a periodic expansion by means of \textit{Browkin II}, then the preperiod must have length either one or even.

\begin{Remark}
If a $p$--adic number $\alpha$ has a periodic continued fraction, using a standard argument it is possible to see that it is a quadratic irrational \cite[Remark 2.7.5]{BEI}. It means that $\alpha$ is the root of an irreducible polynomial of degree $2$ over $\mathbb{Q}$. We denote by $\overline{\alpha}$, and we call it the conjugate of $\alpha$, the second root of this polynomial, that lies inside $\mathbb{Q}_p$.
\end{Remark}

In the following two theorems, we try to obtain the analogue of the Galois' Theorem also for \textit{Browkin II}.

\begin{Theorem}\label{Bedo1}
If $\alpha\in\mathbb{Q}_p$ has a purely periodic continued fraction expansion $\alpha=[\overline{b_0,\ldots,b_{k-1}}]$ with Browkin II, then
\[ |\alpha|_p=1, \ \ |\overline{\alpha}|_p<1.\]
\end{Theorem}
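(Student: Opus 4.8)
The plan is to exploit the recursion $\alpha_{n+1}=\frac{1}{\alpha_n-b_n}$ together with the size estimates on the partial quotients proved in Lemmas \ref{DisuII} and \ref{ab}, distinguishing the behaviour at even and odd indices. First I would record the $p$-adic sizes of the $\alpha_n$. For $n$ even, $b_n=s(\alpha_n)\in J_p$ and $\alpha_n-b_n$ has no negative-power part, so $v_p(\alpha_n-b_n)\geq 1$ whenever $\alpha_n\notin\mathbb Z_p^\times$ has a nontrivial principal part; more precisely $s$ is designed so that $v_p(\alpha_n - s(\alpha_n)) \geq 1$, hence $|\alpha_{n+1}|_p = |\alpha_n - b_n|_p^{-1} > 1$, i.e. $v_p(\alpha_{n+1}) \leq -1$. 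For $n$ odd, the definition of $b_n$ via $t$ (with the sign correction in the ramified case) is exactly engineered so that $v_p(\alpha_n - b_n) = 0$, giving $|\alpha_{n+1}|_p = 1$. Thus along any Browkin II expansion one has $|\alpha_n|_p = 1$ for $n$ odd and $|\alpha_n|_p > 1$ for $n$ even and $n\geq 2$; by pure periodicity $\alpha = \alpha_0 = \alpha_k$, and choosing $k$ even (the period can always be taken even, or one argues directly from $\alpha_0=\alpha_k$ with $k$ the period length — if $k$ is odd replace it by $2k$), we conclude $\alpha_0$ occupies an "even slot", so $|\alpha_0|_p>1$ would follow — but we want $|\alpha|_p=1$, so in fact I need to be careful: the correct reading is that $\alpha=\alpha_0=\alpha_k$ forces $|\alpha_0|_p$ to equal both its even-slot value; since a purely periodic expansion must have $b_0=s(\alpha_0)$ with $\alpha_0=\alpha_k=\frac{1}{\alpha_{k-1}-b_{k-1}}$ and $k-1$ odd, we get $|\alpha_0|_p=|\alpha_{k-1}-b_{k-1}|_p^{-1}=1$. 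This pins down $|\alpha|_p=1$.

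For the conjugate, the standard trick is to apply the Galois-type identity relating $\alpha$ and $\overline\alpha$ through the convergents. Writing $\alpha = \frac{A_{k-1}\alpha + A_{k-2}}{B_{k-1}\alpha + B_{k-2}}$ from pure periodicity, $\alpha$ and $\overline\alpha$ are the two roots of $B_{k-1}x^2 + (B_{k-2}-A_{k-1})x - A_{k-2}=0$. Then I would express $\overline\alpha$ (or rather $-1/\overline{\beta}$ for the reversed continued fraction $\beta=[\overline{b_{k-1},\dots,b_0}]$, as in the classical Galois theorem) and read off its valuation. Concretely, from the product of roots $\alpha\,\overline\alpha = -A_{k-2}/B_{k-1}$ and $|\alpha|_p=1$, one gets $|\overline\alpha|_p = |A_{k-2}|_p/|B_{k-1}|_p$. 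Using the convergent valuation formulas from Remark \ref{RemConv}, $|A_{k-2}|_p = |b_0|_p\cdots|b_{k-2}|_p$ and $|B_{k-1}|_p = |b_1|_p\cdots|b_{k-1}|_p$, so $|\overline\alpha|_p = |b_0|_p / |b_{k-1}|_p$. Now $b_0 = s(\alpha_0) \in J_p$ and since $|\alpha_0|_p = 1$ we have $|b_0|_p \leq 1$, in fact $b_0 \in \mathbb Z_p$ so $|b_0|_p\le 1$; and $b_{k-1}$ is produced by the odd-index rule, so by Lemma \ref{DisuII} it lies in (roughly) $K_p$ up to the sign correction, giving $|b_{k-1}|_p > 1$ (its valuation is strictly negative — this is where $v_p(\alpha_{k-1}-b_{k-1})=0$ with $\alpha_{k-1}$ having negative valuation forces $b_{k-1}$ to carry that negative valuation). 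Hence $|\overline\alpha|_p = |b_0|_p/|b_{k-1}|_p < 1$, as claimed.

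The main obstacle I anticipate is the careful bookkeeping at the odd steps: one must check that the sign-corrected partial quotient $b_n = t(\alpha_n) - \mathrm{sign}(t(\alpha_n))$ in the ramified case still has $v_p(\alpha_n - b_n) = 0$ and still has strictly negative $p$-adic valuation (so that the inductive size pattern and the formula $|b_{k-1}|_p>1$ survive), and separately that $b_{k-1}\ne 0$ so the division makes sense and the last convergent relation is valid. A second delicate point is legitimizing the reduction to even period length, or equivalently arguing directly that in a purely periodic expansion the index $0$ behaves like an even index for the size bookkeeping while $\alpha_0 = 1/(\alpha_{k-1}-b_{k-1})$ forces $|\alpha_0|_p=1$ — these two facts must be shown to be consistent, which they are precisely because $s(\alpha_0)$ with $|\alpha_0|_p=1$ has $v_p(\alpha_0 - s(\alpha_0))\ge 1$ only when the unit digit permits, and one checks the purely periodic case cannot stall. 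Modulo this case-analysis, everything else is a direct computation with valuations and the convergent formulas of Remark \ref{RemConv}.
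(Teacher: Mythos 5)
Your proposal is correct and essentially reproduces the paper's proof: $|\alpha|_p=1$ is obtained from $\alpha_0=\alpha_k=1/(\alpha_{k-1}-b_{k-1})$ with $v_p(\alpha_{k-1}-b_{k-1})=0$ at the odd index $k-1$ (period length even), and $|\overline{\alpha}|_p<1$ from the quadratic equation furnished by the convergents together with the valuation formulas of Remark \ref{RemConv}, yielding $|\overline{\alpha}|_p=|b_0|_p/|b_{k-1}|_p<1$ since $v_p(b_{k-1})<0$. The only blemish is that your intermediate size pattern has the parities swapped (in fact $|\alpha_n|_p>1$ for $n$ odd and $|\alpha_n|_p=1$ for even $n\geq 2$), but you notice the tension yourself and the argument you actually run is the correct one.
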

\begin{proof}
Let us notice that, by construction of \textit{Browkin II}, the period length $k$ must be even. Since, by the pure periodicity,
\[v_p(\alpha)=v_p(b_0)=v_p(b_k)=0,\]
then $|\alpha|_p=1$.
If we set, for all $n\in\mathbb{N}$,
\[\alpha_n=[b_n,b_{n+1},\ldots,b_{n+k-1},\alpha_n]=[b_0',b_1',\ldots,b_{k-1}',\alpha_n], \]
and $\dfrac{A_n'}{B_n'}$ are its convergents, then:
\[B_{k-1}'\alpha_n^2+(B_{k-2}'-A_{k-1}')\alpha_n-A_{k-2}'=0.\]
By \eqref{Equ2},
\[|\alpha_n\overline{\alpha}_n|_p=\left\lvert\frac{A_{k-2}'}{B_{k-1}'}\right\rvert _p=\frac{|b_0'|_p|b_1'|_p\ldots|b_{k-2}'|_p}{|b_1'|_p\ldots|b_{k-2}'|_p|b_{k-1}'|_p}=\frac{|b_0'|_p}{|b_{k-1}'|_p}=\frac{|b_n|_p}{|b_{n+k-1}|_p},\]
from which we get
\[|\overline{\alpha}_n|_p=\frac{1}{|b_{n+k-1}|_p}.\]
Since $k-1$ is odd, then:
\[
\begin{cases}
|\overline{\alpha}_n|_p=1 \ \ \ \ \textup{if} \ n \ \textup{odd}\\ 
|\overline{\alpha}_n|_p<1 \ \ \ \ \textup{if} \ n \ \textup{even},
\end{cases}\]
and, in particular, for $n=0$, $|\overline{\alpha}|_p=|\overline{\alpha}_0|_p<1$.
\end{proof}

In light of Theorem \ref{Bedo1}, it is meaningful to wonder which are (and if there exist) the $p$--adic numbers satisfying the necessary condition for having the pure periodicity.

\begin{Proposition}\label{PureSqrts}
Let $\alpha=a+\sqrt{D}\in\mathbb{Q}_p$, with $a, D\in\mathbb{Z}$, $D$ not a square,
\[\sqrt{D}=a_0+a_1p+a_2p^2+\ldots.\]
Then $|\alpha|_p=1$, $|\overline{\alpha}|_p<1$ if and only if $a\equiv a_0 \mod p$.
\begin{proof}
Let us notice that $a_0\not\equiv 0 \mod p$ and, if  $|\alpha|_p=1$ and $|\overline{\alpha}|_p<1$, then
\[v_p(\overline{\alpha})=v_p(a-a_0-a_1p-a_2p^2-\ldots)>1;\]
it means that $a-a_0\equiv 0 \mod p$, so $a\equiv a_0 \mod p$.\\
Viceversa, if $a\equiv a_0 \mod p$, then $a=a_0+kp$, for some $k\in\mathbb{Z}$. Therefore,
\[v_p(\alpha)=v_p(a+\sqrt{D})=v_p(2a_0+(k+a_1)p+\ldots)=0,\]
since $2a_0\not\equiv 0 \mod p$, for $p\neq 2$; moreover
\[v_p(\overline{\alpha})=v_p(a-\sqrt{D})=v_p((k-a_1)p+\ldots)>0.\]
Hence, in this case, $|\alpha|_p=1$ and $|\overline{\alpha}|_p<1.$
\end{proof}
\end{Proposition}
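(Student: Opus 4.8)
The plan is to unwind both implications directly from the definitions $\overline{\alpha}=a-\sqrt D$ and the given expansion $\sqrt D=a_0+a_1p+a_2p^2+\cdots$, reducing every statement about $|\cdot|_p$ to the residue of $a-a_0$ modulo $p$. Throughout I use that $a_0$ is a $p$--adic unit, i.e.\ $a_0\not\equiv 0 \pmod p$: this is forced by the fact that the expansion of $\sqrt D$ starts at $p^0$ (equivalently $v_p(D)=0$, the relevant case), and since $p$ is odd it also yields $2a_0\not\equiv 0\pmod p$.

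For the forward direction, assume $|\alpha|_p=1$ and $|\overline{\alpha}|_p<1$, i.e.\ $v_p(\overline{\alpha})\geq 1$. Writing
\[
\overline{\alpha}=a-\sqrt D=(a-a_0)-a_1p-a_2p^2-\cdots,
\]
the tail $-a_1p-a_2p^2-\cdots$ has valuation $\geq 1$, so $v_p(\overline{\alpha})\geq 1$ forces $a-a_0\equiv 0\pmod p$, that is $a\equiv a_0\pmod p$. Conversely, assume $a\equiv a_0\pmod p$ and pick $k\in\mathbb{Z}$ with $a=a_0+kp$. Then
\[
\alpha=a+\sqrt D=2a_0+(k+a_1)p+a_2p^2+\cdots,
\]
and since $2a_0\not\equiv 0\pmod p$ this has valuation $0$, whence $|\alpha|_p=1$; similarly $\overline{\alpha}=a-\sqrt D=(k-a_1)p-a_2p^2-\cdots$ has valuation $\geq 1$, so $|\overline{\alpha}|_p<1$. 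This proves the equivalence.

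I do not expect a genuine obstacle here: the argument is a routine manipulation of $p$--adic valuations. The only points requiring care are the justification that $a_0$, and hence $2a_0$, is a unit modulo $p$ — which is precisely where the hypothesis on $D$ and the oddness of $p$ are used — and keeping track of which coefficient governs the valuation after the $a_0$--terms cancel in $\overline{\alpha}$ and add up in $\alpha$.
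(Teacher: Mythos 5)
Your proof is correct and follows essentially the same route as the paper: the forward direction reads off $a\equiv a_0\pmod p$ from $v_p(\overline{\alpha})\geq 1$ after cancelling the constant terms, and the converse substitutes $a=a_0+kp$ and uses that $2a_0$ is a unit since $p$ is odd. Your explicit remark that $v_p(\overline{\alpha})\geq 1$ (rather than the paper's slightly miswritten $>1$) and your justification of $a_0\not\equiv 0\pmod p$ are welcome clarifications, but the argument is the same.
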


The converse of Theorem \ref{Bedo1} is not true and the best that one can prove is stated in the following theorem.

\begin{Theorem}\label{LengthII}
Consider $\alpha\in\mathbb{Q}_p$ with periodic \textit{Browkin II} expansion
\[\alpha=[b_0,b_1,\ldots,b_{h-1},\overline{b_h,\ldots,b_{h+k-1}}].\]
If
\[ |\alpha|_p=1, \ \ |\overline{\alpha}|_p<1,\]
then the preperiod length can not be odd.
\begin{proof}
Let us notice that, by the hypotesis and the construction of \textit{Browkin II}, the period length $k$ is even and, for all $j\in\mathbb{N}$,
\begin{align*}
|\alpha|_p&=|\alpha_{2j}|_p=1,\\
v_p(\alpha)&=v_p(b_0)=v_p(b_{2j})=0.
\end{align*}
Moreover,
\begin{align*}
|\overline{\alpha}_0|_p&=|\overline{\alpha}|_p<1,\\
v_p(\overline{\alpha}_0)&=v_p(\overline{\alpha})>0,
\end{align*}
and it follows that:
\begin{align*}
v_p(\overline{\alpha}_1)&=v_p\Big( \frac{1}{\overline{\alpha}_0  - b_0}\Big)=-v_p(\overline{\alpha}_0  - b_0)=-v_p(b_0)=0,\\
v_p(\overline{\alpha}_2)&=v_p\Big( \frac{1}{\overline{\alpha}_1  - b_1}\Big)=-v_p(\overline{\alpha}_1  - b_1)=-v_p(b_1)>0.
\end{align*}
Hence, the $p$--adic absolute value of each complete quotient is, for all $j \in\mathbb{N}$,
\begin{align*}
|\alpha|_p&=1, \ & |\alpha_{2j+1}|_p&>1, \ &|\alpha_{2j}|_p&=1,    \\
|\overline{\alpha}|_p&<1, & |\overline{\alpha}_{2j+1}|_p&=1, &  |\overline{\alpha}_{2j}|_p&<1. 
\end{align*}
Since $\alpha$ has a periodic expansion,
\[\frac{1}{\alpha_{h-1}-b_{h-1}}=\alpha_h=\alpha_{h+k}=\frac{1}{\alpha_{h+k-1}-b_{h+k-1}}.\]
So we obtain:
\[|\alpha_{h-1}-\alpha_{h+k-1}|_p=|b_{h-1}-b_{h+k-1}|_p,\]
and, analogously,
\[|\overline{\alpha}_{h-1}-\overline{\alpha}_{h+k-1}|_p=|b_{h-1}-b_{h+k-1}|_p.\]
By contradiction, if the preperiod length $h$ is odd, both $h-1$ and $h+k-1$ are even. Then $v_p(\overline{\alpha}_{h-1})>0$ and $v_p(\overline{\alpha}_{h+k-1})>0$, so:
\begin{align*}
v_p(b_{h-1}-b_{h+k-1})_p&=v_p(\overline{\alpha}_{h-1}-\overline{\alpha}_{h+k-1})\geq \\ &\geq \min \{v_p(\overline{\alpha}_{h-1}),v_p(\overline{\alpha}_{h+k-1}) \}>0.
\end{align*}
By Lemma \ref{LemBedo}, $b_{h-1}=b_{h+k-1}$ and the claim is proved.
\end{proof}
\end{Theorem}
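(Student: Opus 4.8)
The plan is to control the $p$-adic size of every complete quotient $\alpha_n$ together with that of its conjugate $\overline{\alpha}_n$, and then extract the parity obstruction from the periodicity relation. I would first record two preliminaries: that the alternation of $s$ and $t$ in \textit{Browkin II} forces any genuine period to have even length $k$ (already used in Theorem \ref{Bedo1}), and that the conjugate complete quotients obey the same recursion $\overline{\alpha}_{n+1}=\tfrac{1}{\overline{\alpha}_n-b_n}$, since each $b_n\in\mathbb{Q}$ is fixed by the nontrivial automorphism of $\mathbb{Q}(\alpha)/\mathbb{Q}$ and every $\alpha_n$ lies in $\mathbb{Q}(\alpha)$.

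The heart of the argument is an induction showing that, under the hypothesis $|\alpha|_p=1$ and $|\overline{\alpha}|_p<1$,
\[
|\alpha_{2j}|_p=1,\qquad |\alpha_{2j+1}|_p>1,\qquad |\overline{\alpha}_{2j}|_p<1,\qquad |\overline{\alpha}_{2j+1}|_p=1
\]
for all $j\ge 0$. In the even-to-odd step one uses $b_{2j}=s(\alpha_{2j})$ with $v_p(b_{2j})=v_p(\alpha_{2j})=0$ and $v_p(\alpha_{2j}-b_{2j})\ge 1$, giving $v_p(\alpha_{2j+1})<0$; and $v_p(\overline{\alpha}_{2j})>0=v_p(b_{2j})$, giving $v_p(\overline{\alpha}_{2j}-b_{2j})=0$ and hence $v_p(\overline{\alpha}_{2j+1})=0$. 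In the odd-to-even step one checks, from the two cases in the definition of $b_n$ for odd $n$, that $v_p(b_{2j+1})=v_p(\alpha_{2j+1})<0$ and $v_p(\alpha_{2j+1}-b_{2j+1})=0$; the $\pm1$ correction is harmless because $\pm1$ has valuation $0>v_p(t(\alpha_{2j+1}))$, so it affects neither $v_p(b_{2j+1})$ nor $v_p(\alpha_{2j+1}-b_{2j+1})$. This yields $v_p(\alpha_{2j+2})=0$, and since $v_p(b_{2j+1})<0=v_p(\overline{\alpha}_{2j+1})$ also $v_p(\overline{\alpha}_{2j+2})=-v_p(b_{2j+1})>0$. (The pattern of the $v_p(b_n)$ can alternatively be read off from the convergent valuation formulas recalled in Remark \ref{RemConv}.)

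Finally I would feed in the periodicity. From $\alpha_h=\alpha_{h+k}$, hence also $\overline{\alpha}_h=\overline{\alpha}_{h+k}$, the recursion gives $\alpha_{h-1}-\alpha_{h+k-1}=b_{h-1}-b_{h+k-1}=\overline{\alpha}_{h-1}-\overline{\alpha}_{h+k-1}$, so that $v_p(b_{h-1}-b_{h+k-1})=v_p(\overline{\alpha}_{h-1}-\overline{\alpha}_{h+k-1})$. Assume for contradiction that the (minimal) preperiod length $h$ is odd: then $h-1$ and $h+k-1$ are both even, so the pattern gives $v_p(\overline{\alpha}_{h-1})>0$ and $v_p(\overline{\alpha}_{h+k-1})>0$, whence $v_p(b_{h-1}-b_{h+k-1})\ge\min\{v_p(\overline{\alpha}_{h-1}),v_p(\overline{\alpha}_{h+k-1})\}>0$. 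Since $b_{h-1}$ and $b_{h+k-1}$ are partial quotients at even indices, they lie in $J_p$, so Lemma \ref{LemBedo} forces $b_{h-1}=b_{h+k-1}$, hence $\alpha_{h-1}=\alpha_{h+k-1}$, i.e.\ the expansion is already periodic from index $h-1$ — contradicting the minimality of $h$. The only delicate point is the induction of the middle paragraph: keeping the valuations straight through the case split and the $\pm1$ correction in the odd step of \textit{Browkin II}. Once that pattern is available, the periodicity relation and Bedocchi's lemma finish the proof in a few lines.
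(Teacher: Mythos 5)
Your proposal is correct and follows essentially the same route as the paper's proof: the same inductive valuation pattern for $\alpha_n$ and $\overline{\alpha}_n$, the same use of the periodicity relation to get $v_p(b_{h-1}-b_{h+k-1})>0$, and the same appeal to Lemma \ref{LemBedo} for partial quotients in $J_p$. The only difference is that you spell out the final step (that $b_{h-1}=b_{h+k-1}$ forces $\alpha_{h-1}=\alpha_{h+k-1}$, contradicting minimality of the preperiod), which the paper leaves implicit.
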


\begin{Remark}\label{pato}
In the proof of Theorem \ref{LengthII} we obtained that, for $h$ odd, $b_{h+k}=b_{h}$ implies $b_{h+k-1}=b_{h-1}$. This is done by using Lemma \ref{LemBedo} of Bedocchi for the function $s$. In the first section we have proved a similar result for the second function $t$, that is Lemma \ref{ab}. Lemma \ref{ab} allows us to get the implication from $b_{h+k}=b_{h}$ to $b_{h+k-1}=b_{h-1}$ also for $h$ even. In fact, if the preperiod length $h$ is even, both $h-1$ and $h+k-1$ are odd. Then $v_p(\overline{\alpha}_{h-1})=0$ and $v_p(\overline{\alpha}_{h+k-1})=0$. Therefore, if $b_{h-1}=t(\alpha_{h-1})$ and $b_{h+k-1}=t(\alpha_{h+k-1})$, then $b_{h-1},b_{h+k-1}\in K_p$. Reasoning as in the proof of Theorem \ref{LengthII} for the odd case, we get
\begin{align*}
    v_p(b_{h-1}-b_{h+k-1})_p&=v_p(\overline{\alpha}_{h-1}-\overline{\alpha}_{h+k-1})\geq \\ &\geq \min \{v_p(\overline{\alpha}_{h-1}),v_p(\overline{\alpha}_{h+k-1}) \}\geq 0.
\end{align*}
We conclude by Lemma \ref{ab} that $b_{h-1}=b_{h+k-1}$ also when $h$ is even.
\end{Remark}

We have seen in Remark \ref{pato} that using Lemma \ref{DisuII} and Lemma \ref{ab} we obtain that $b_{h+k}=b_{h}$ implies $b_{h+k-1}=b_{h-1}$ also for $h$ even when the two partial quotients are computed using the function $t$. The problem with \textit{Browkin II} is that for some odd $n\in\mathbb{N}$, \[b_n=t(\alpha_n)-sign(t(\alpha_n)).\]
This happens when $\alpha_n$ has not the constant term, in order to always recover a partial quotient with null $p$--adic valuation. In the following example we see that this case can actually occur.

\begin{Example}\label{ExaExp}
Let us consider
\[\sqrt{30}=3-3p+\ldots\in\mathbb{Q}_7,\]
then the expansion of $\alpha=\sqrt{30}+3$ is
\[ \sqrt{30}+3=\left[-1, \frac{3}{7}, 3, \frac{2}{7},\overline{1, \frac{2}{7}, -2, \frac{3}{7}, 1, \frac{2}{7}, 2, \frac{1}{7}, -1, -\frac{5}{7}}\right],\]
that is not purely periodic and has preperiod $4$.
Notice that the $p$--adic number $\alpha$ satistfies the hypotesis of Theorem \ref{LengthII} since

\begin{align*}
v_p\Big(3+\sqrt{30}\Big)&=0,\\
v_p\Big(3-\sqrt{30}\Big)&=v_p\Big(-3p+\ldots\Big)>0.
\end{align*}

In this case we can not make the step backward from $b_4=b_{14}$ to $b_3=b_{13}$ since
\begin{align*}
b_3&=t(\alpha_3)=\frac{2}{7},\\
b_{13}&=t(\alpha_{13})-sign(t(\alpha_{13}))=\frac{2}{7}-1=-\frac{5}{7}.
\end{align*}
In fact in the generation of $b_{13}$ it is used the sign function along with the $t$ function.

\end{Example}

In Remark \ref{pato} and Example \ref{ExaExp}, we have observed that the converse of Theorem \ref{Bedo1} holds whenever the function $sign$ does not appear during the generation of the even partial quotients $b_h$ and $b_{h+k}$, or whenever they use the same function $sign$. It would be interesting, then, to understand for which of the quadratic irrationals of Theorem \ref{LengthII} these cases always occur, in order to prove an explicit characterization of the pure periodicity of \textit{Browkin II}.


Now we investigate the length of the preperiod of \textit{Browkin II} expansions. In order to do that, we first introduce an algorithm that is a slight modification of \textit{Browkin II}.


\begin{Definition}[Browkin II*]
We call \textit{Browkin II*} the algorithm where the role of the functions $s$ and $t$ is switched. Starting from $\alpha_0\in\mathbb{Q}_p$, with $v_p(\alpha_0)<0$, the partial quotients of the $p$--adic continued fraction expansion are obtained for $n \geq 0$ by
\begin{align} 
\begin{cases}\label{Br2*}
b_n=s(\alpha_n) \ \ \ \ \ & \textup{if} \ n \ \textup{odd}\\
b_n=t(\alpha_n) & \textup{if} \ n \ \textup{even}\ \textup{and} \ v_p(\alpha_n-t(\alpha_n))= 0\\
b_n=t(\alpha_n)-sign(t(\alpha_n)) & \textup{if} \ n \ \textup{even} \ \textup{and} \ v_p(\alpha_n-t(\alpha_n))\neq 0\\
\alpha_{n+1}=\frac{1}{\alpha_n-b_n}.
\end{cases}
\end{align}

\end{Definition}

The $p$--adic convergence of any continued fraction of this kind is guaranteed. In fact, every infinite continued fraction $[b_0,b_1,b_2,\ldots]$ can be written as $b_0+\frac{1}{\alpha}$,
where $\alpha=[b_1,b_2,\ldots]\in\mathbb{Q}_p$ is obtained by \textit{Browkin II}. It is not hard to see that the observations in Remark \ref{RemConv} hold also for \textit{Browkin II*}, so the valuations of the convergents can be computed as in Equations $($\ref{Equ1}$)$ and $($\ref{Equ2}$)$.\\

In the following two theorems, we get some results similar to Theorem \ref{Bedo1} and Theorem \ref{LengthII} also for \textit{Browkin II*}.

\begin{Theorem}\label{Bedo1star}
If $\alpha\in\mathbb{Q}_p$ has a purely periodic continued fraction expansion $\alpha=[\overline{b_0,\ldots,b_{k-1}}]$ with \textit{Browkin II*}, then
\[v_p(\alpha)<0, \ \ v_p(\overline{\alpha})=0.\]
\begin{proof}
Let us assume that the expansion is purely periodic, that is
\[\alpha=[\overline{b_0,\ldots,b_{k-1}}],\]
with $k$ even by construction. Then
\[v_p(\alpha)=v_p(b_0)=v_p(b_k)<0.\]
For what concerns the valuation of the conjugate, by the pure periodicity we can write
\[\alpha=[b_0,\ldots,b_{k-1},\alpha].\]
Therefore
\[\alpha=\frac{\alpha A_{k-1}+A_{k-2}}{\alpha B_{k-1}+B_{k-2}},\]
and
\[B_{k-1}\alpha^2+(B_{k-2}-A_{k-1})\alpha -A_{k-2}=0.\]
Then
\[|\alpha \overline{\alpha}|_p=\frac{|A_{k-2}|_p}{|B_{k-1}|_p}=\frac{|b_0|_p\ldots |b_{k-2}|_p}{|b_1|_p\ldots |b_{k-1}|_p}=\frac{|b_0|_p}{|b_{k-1}|_p}.\]
Recalling that $|\alpha|_p=|b_0|_p$ and $|b_{k-1}|=1$ since $k-1$ is odd, then $|\overline{\alpha}|=1$ and the claim is proved, that is $v_p(\alpha)<0$ and $v_p(\overline{\alpha})=0$.
\end{proof}
\end{Theorem}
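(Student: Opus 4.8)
The plan is to mirror the proof of Theorem \ref{Bedo1} almost verbatim, since \textit{Browkin II*} is obtained from \textit{Browkin II} by swapping the roles of $s$ (which outputs in $J_p$, values of $p$-adic valuation $\leq 0$) and $t$ (which outputs in $K_p$, values of $p$-adic valuation $<0$), together with the observation from the paragraph just before that Equations \eqref{Equ1} and \eqref{Equ2} continue to hold for \textit{Browkin II*}. First I would note that by construction the period length $k$ must be even, so that the indices split cleanly into the two parities. From pure periodicity, $b_0$ is computed by the even branch (the $t$-branch, possibly adjusted by $sign$), so $v_p(b_0)<0$; since $v_p(\alpha)=v_p(b_0)=v_p(b_k)$, this gives $v_p(\alpha)<0$, i.e.\ $|\alpha|_p>1$.

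Next I would extract the minimal polynomial of $\alpha$ from the relation $\alpha=[b_0,\dots,b_{k-1},\alpha]=\dfrac{\alpha A_{k-1}+A_{k-2}}{\alpha B_{k-1}+B_{k-2}}$, yielding $B_{k-1}\alpha^2+(B_{k-2}-A_{k-1})\alpha-A_{k-2}=0$, so that $\alpha\overline{\alpha}=-A_{k-2}/B_{k-1}$. Applying \eqref{Equ2} to compute $|A_{k-2}|_p=|b_0|_p\cdots|b_{k-2}|_p$ and $|B_{k-1}|_p=|b_1|_p\cdots|b_{k-1}|_p$, the common factors $|b_1|_p\cdots|b_{k-2}|_p$ cancel and one obtains $|\alpha\overline{\alpha}|_p=|b_0|_p/|b_{k-1}|_p$. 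Since $|\alpha|_p=|b_0|_p$ directly from pure periodicity ($v_p(\alpha)=v_p(b_0)$), this forces $|\overline{\alpha}|_p=1/|b_{k-1}|_p$.

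Finally I would observe that $k-1$ is odd, so $b_{k-1}=s(\alpha_{k-1})\in J_p$, and — here is the one place requiring a small extra remark beyond the \textit{Browkin II} argument — in \textit{Browkin II*} the index $k-1$ being odd means $b_{k-1}$ is produced by the $s$-branch applied to the complete quotient $\alpha_{k-1}=\alpha_{-1}$ of the purely periodic expansion, whose valuation is $0$ (it arises as $1/(\alpha_{k-2}-b_{k-2})$ with $v_p(\alpha_{k-2}-b_{k-2})=v_p(b_{k-2})<0$ in the \textit{Browkin II*} generation pattern). Hence $v_p(b_{k-1})=v_p(s(\alpha_{k-1}))=0$, i.e.\ $|b_{k-1}|_p=1$, and therefore $|\overline{\alpha}|_p=1$, that is $v_p(\overline{\alpha})=0$, which is exactly the claim. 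I do not expect any genuine obstacle here: the content is entirely parallel to Theorem \ref{Bedo1}, the only care needed being to track that in \textit{Browkin II*} it is the \emph{odd}-indexed complete quotients that have valuation $0$ and the \emph{even}-indexed ones that have negative valuation (the reverse of \textit{Browkin II}), which is precisely what makes the conclusion $v_p(\alpha)<0$, $v_p(\overline{\alpha})=0$ come out swapped relative to Theorem \ref{Bedo1}.
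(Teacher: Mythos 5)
Your proposal follows the paper's own proof essentially verbatim: same reduction $v_p(\alpha)=v_p(b_0)<0$ from pure periodicity, same quadratic $B_{k-1}\alpha^2+(B_{k-2}-A_{k-1})\alpha-A_{k-2}=0$, same use of Equation \eqref{Equ2} to get $|\alpha\overline{\alpha}|_p=|b_0|_p/|b_{k-1}|_p$, and the same final observation that $|b_{k-1}|_p=1$ because $k-1$ is odd.

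The one slip is in your parenthetical justification that $v_p(\alpha_{k-1})=0$: you claim $\alpha_{k-1}=1/(\alpha_{k-2}-b_{k-2})$ with $v_p(\alpha_{k-2}-b_{k-2})=v_p(b_{k-2})<0$. That is not how the even ($t$-)branch of \textit{Browkin II*} works: $b_{k-2}$ cancels the negative-power part of $\alpha_{k-2}$, and the $sign$ correction exists precisely to enforce $v_p(\alpha_{k-2}-b_{k-2})=0$; moreover, if your stated equality held it would give $v_p(\alpha_{k-1})=-v_p(b_{k-2})>0$, contradicting the valuation $0$ you need (and then $s(\alpha_{k-1})$ could even vanish). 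With the justification corrected to $v_p(\alpha_{k-2}-b_{k-2})=0$, you get $v_p(\alpha_{k-1})=0$, hence $v_p(b_{k-1})=v_p(s(\alpha_{k-1}))=0$, and the rest of your argument is exactly the paper's.
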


\begin{Theorem}\label{LengthStar}
Let $\alpha\in\mathbb{Q}_p$ with periodic \textit{Browkin II*} expansion
\[\alpha=[b_0,b_1,\ldots,b_{h-1},\overline{b_h,\ldots,b_{h+k-1}}].\]
Then, if
\[v_p(\alpha)<0, \ \ v_p(\overline{\alpha})=0,\]
the preperiod length can not be even.
\begin{proof}
Let us assume that $v_p(\alpha)<0$ and $v_p(\overline{\alpha})=0$ and that the expansion of $\alpha$ is periodic of the form
\[\alpha=[b_0,\ldots,b_{h-1},\overline{b_h,\ldots,b_{h+k-1}}].\]
By hypotesis, $v(b_0)=v(\alpha)<0$ and, by construction of \textit{Browkin II*} algorithm, for all $j\in\mathbb{N}$,
\begin{align*}
v_p(b_{2j})&<0,\\
v_p(b_{2j+1})&=0.
\end{align*}
So
\[v_p(\overline{\alpha}_1)=v_p\Big(\frac{1}{\overline{\alpha}-b_0}\Big)=-v_p(\overline{\alpha}-b_0)=-v_p(b_0)>0,\]
since $v_p(\overline{\alpha})=0$ and $v_p(b_0)<0$. Then
\[v_p(\overline{\alpha}_2)=v_p\Big(\frac{1}{\overline{\alpha}_1-b_1}\Big)=-v_p(\overline{\alpha}_1-b_1)=-v_p(b_1)=0,\]
since $v_p(\overline{\alpha}_1)=0$. It follows easily that, for all $j\in\mathbb{N}$,
\[v_p(b_{2j})<0, \ \ \ v_p(b_{2j+1})=0\]
Now let us suppose that $\alpha_h=\alpha_{h+k}$, that is
\[\overline{\alpha}_{h+k-1}-\overline{\alpha}_{h-1}=b_{h+k-1}-b_{h-1}.\]
If the preperiod length $h$ is even, both $h-1$ and $h+k-1$ are odd. Then $v_p(\overline{\alpha}_{h-1})>0$ and $v_p(\overline{\alpha}_{h+k-1})>0$, so:
\begin{align*}
v_p(b_{h-1}-b_{h+k-1})_p &=v_p(\overline{\alpha}_{h-1}-\overline{\alpha}_{h+k-1})\geq \\ &\geq \min \{v_p(\overline{\alpha}_{h-1}),v_p(\overline{\alpha}_{h+k-1}) \} > 0.
\end{align*}
We conclude by Lemma \ref{LemBedo} that $b_{h-1}=b_{h+k-1}$ and the claim is proved.
\end{proof}
\end{Theorem}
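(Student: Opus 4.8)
The plan is to mirror, almost verbatim, the argument of Theorem~\ref{LengthII}, since \textit{Browkin II*} is built from the same two functions $s$ and $t$ with their parity roles exchanged, and Remark~\ref{RemConv} guarantees that Equations~\eqref{Equ1} and~\eqref{Equ2} still hold. First I would record the parity pattern of the partial quotients: by construction of \textit{Browkin II*} the period length $k$ is even, and since $b_n = s(\alpha_n)$ for $n$ odd while $b_n$ is computed via $t$ (possibly corrected by $sign$) for $n$ even, one has $v_p(b_{2j})<0$ and $v_p(b_{2j+1})=0$ for all $j\in\mathbb{N}$, using $v_p(b_0)=v_p(\alpha)<0$ as the base case.

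Next I would propagate this to the conjugate complete quotients. Starting from $v_p(\overline{\alpha}_0)=v_p(\overline{\alpha})=0$, the recurrence $\overline{\alpha}_{n+1} = \tfrac{1}{\overline{\alpha}_n - b_n}$ together with the valuation pattern of the $b_n$ forces $v_p(\overline{\alpha}_{2j+1})>0$ and $v_p(\overline{\alpha}_{2j})=0$ for all $j$; the key point at each step is that $v_p(\overline{\alpha}_n)$ and $v_p(b_n)$ are never simultaneously equal, so $v_p(\overline{\alpha}_n - b_n) = \min\{v_p(\overline{\alpha}_n), v_p(b_n)\}$ is forced and the valuation of the next term is its negative. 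Then, from the periodicity $\alpha_h = \alpha_{h+k}$, i.e.\ $\tfrac{1}{\alpha_{h-1}-b_{h-1}} = \tfrac{1}{\alpha_{h+k-1}-b_{h+k-1}}$, I take the conjugate relation to get $\overline{\alpha}_{h-1} - \overline{\alpha}_{h+k-1} = b_{h-1} - b_{h+k-1}$ (the same identity already written in the theorem statement's proof sketch).

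Finally I would argue by contradiction: if the preperiod length $h$ were even, then $h-1$ and $h+k-1$ are both odd, so by the parity pattern $v_p(\overline{\alpha}_{h-1})>0$ and $v_p(\overline{\alpha}_{h+k-1})>0$, whence
\[
v_p(b_{h-1} - b_{h+k-1}) = v_p(\overline{\alpha}_{h-1} - \overline{\alpha}_{h+k-1}) \geq \min\{v_p(\overline{\alpha}_{h-1}), v_p(\overline{\alpha}_{h+k-1})\} > 0.
\]
But for $h$ even (hence $h-1$ odd), $b_{h-1}$ and $b_{h+k-1}$ are produced by the function $s$, so they lie in $J_p$; Lemma~\ref{LemBedo} then forces $b_{h-1} = b_{h+k-1}$, contradicting the minimality of the preperiod.

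I do not expect any genuine obstacle here: the whole content is the bookkeeping of valuations, and the crucial structural feature---that on odd indices (where $b_{h-1}$ sits when $h$ is even) the partial quotient is the ``nice'' $s$-value in $J_p$---is exactly what makes Lemma~\ref{LemBedo} applicable and sidesteps the $sign$-correction pathology that obstructed the even-preperiod case for \textit{Browkin II} in Example~\ref{ExaExp}. The only point to state carefully is the base of the induction on the parity pattern (that $v_p(\overline{\alpha})=0$ and $v_p(b_0)<0$ really do start the alternation), and that $k$ even is automatic so that $h-1$ and $h+k-1$ have the same parity.
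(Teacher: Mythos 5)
Your proposal is correct and follows essentially the same route as the paper: the same parity bookkeeping for $v_p(b_n)$ and $v_p(\overline{\alpha}_n)$, the same conjugate identity from $\alpha_h=\alpha_{h+k}$, and the same final appeal to Lemma~\ref{LemBedo} using that for $h$ even the partial quotients $b_{h-1},b_{h+k-1}$ come from $s$ and hence lie in $J_p$. Your explicit remark that this is what sidesteps the $sign$-correction issue is a nice clarification, but the underlying argument is identical to the paper's.
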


\begin{Example}
With these hypoteses, Theorem \ref{LengthStar} is the best we can obtain as a converse of Theorem \ref{Bedo1star}. In fact, if we consider the expansion of $\alpha=\frac{2+\sqrt{79}}{75}$ in $\mathbb{Q}_5$ is
\begin{align*}
\frac{2+\sqrt{79}}{75}=\Big[ &-\frac{7}{25}, 1, \frac{2}{5}, 2, -\frac{2}{5}, 1, \frac{1}{5}, 2, -\frac{4}{25}, 2, \frac{1}{5}, 1, -\frac{2}{5},\\
& 2, \frac{2}{5}, \overline{1, -\frac{7}{25}, -1, \frac{1}{5}, 2, \frac{9}{25}, -1, -\frac{3}{5}}\Big] ,
\end{align*}
that is not purely periodic and has preperiod $15$.
Notice that the $p$--adic number $\alpha$ satistfies the hypotesis of Theorem \ref{LengthStar} since starting from
\[\sqrt{79}=2+2p^2+p^3+\ldots\in\mathbb{Q}_5,\]
then:
\begin{align*}
v_p\Big(\frac{2+\sqrt{79}}{75}\Big)&=v_p(2+\sqrt{79})-v_p(75)=v_p(4+\ldots)-2=-2<0,\\
v_p\Big(\frac{2-\sqrt{79}}{75}\Big)&=v_p(2-\sqrt{79})-v_p(75)=v_p(2p^2+p^3+\ldots)-2=0.
\end{align*}
\end{Example}

 As consequence of the previous results, we are able to characterize the length of the preperiods for square roots of integers that have a periodic representation by means of \textit{Browkin II}.

\begin{Proposition}
\label{prop:preperiod}
Let $\sqrt{D}$ be defined in $\mathbb{Q}_p$, with $D\in\mathbb{Z}$ not a square, then, if $\sqrt{D}$ has  a periodic continued fraction with \textit{Browkin II}, the preperiod has length either one or even.
\begin{proof}
Notice that $\alpha=\sqrt{D}$ can not have purely periodic continued fraction, by Theorem \ref{Bedo1}. We can then write it as
\[\alpha=b_0+\frac{1}{\alpha_1}.\]
Since $\alpha$ has a periodic continued fraction, also $\alpha_1$ has periodic continued fraction. We are going to show that:
\begin{align*}
i) \ \  v_p(\alpha_1)=v_p\Big(\frac{1}{\alpha-b_0} \Big)=-v_p(\alpha-b_0)<0,\\
ii) \ \ v_p(\overline{\alpha}_1)=v_p\Big(\frac{1}{\overline{\alpha}-b_0} \Big)=-v_p(\overline{\alpha}-b_0)=0.
\end{align*}
Notice that $i)$ is satisfied since $s(\alpha)=b_0$ and $v_p(\alpha-b_0)>0$. Since $\overline{\alpha}=-\sqrt{D}$, then
\[\overline{\alpha}=-b_0+a'_1p+a_2'p^2+\ldots.\]
Now, $v_p(\overline{\alpha}-b_0)=0$ if and only if $b_0\neq-b_0$, that is $2b_0\neq 0$. This is always the case for $p\neq 2$, so by Theorem \ref{LengthStar}, $\beta$ can not have an even preperiod length. So $\alpha$ has either preperiod of length $1$ or of even length.
\end{proof}
\end{Proposition}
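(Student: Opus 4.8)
The plan is to transfer the problem to \textit{Browkin II*} by passing to the first tail of the expansion, and then to invoke Theorems \ref{Bedo1star} and \ref{LengthStar}. First I would note that $\sqrt{D}$ itself cannot be purely periodic: by Theorem \ref{Bedo1} this would force $|\sqrt{D}|_p=1$ and $|\overline{\sqrt{D}}|_p<1$, which is impossible since $\overline{\sqrt{D}}=-\sqrt{D}$ has $|\overline{\sqrt{D}}|_p=|\sqrt{D}|_p$. Hence the preperiod has some length $h\ge 1$, and we may write $\sqrt{D}=b_0+\frac{1}{\alpha_1}$ with $b_0=s(\sqrt{D})$, where $\alpha_1=[b_1,b_2,\ldots]$ is again periodic (a tail of a periodic expansion is periodic), with preperiod $h-1$. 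The key bookkeeping observation is that \textit{Browkin II} applies its two functions in the order $s,t,s,t,\ldots$ starting from the even index $0$, so the tail $\alpha_1$ uses them in the order $t,s,t,s,\ldots$; that is, $\alpha_1$ is precisely a \textit{Browkin II*} expansion, and the preperiod results of this section for \textit{Browkin II*} apply to it.

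Next I would verify the valuation hypotheses of Theorem \ref{LengthStar} for $\alpha_1$. Since $b_0=s(\sqrt{D})$ removes exactly the terms of non-negative power of $p$, we have $v_p(\sqrt{D}-b_0)\ge 1>0$ (and $\sqrt{D}\ne b_0$ because $\sqrt{D}$ is irrational), so $v_p(\alpha_1)=-v_p(\sqrt{D}-b_0)<0$. For the conjugate, assume first $v_p(\sqrt{D})=0$ and write $\sqrt{D}=a_0+a_1p+\ldots$ with $a_0\in\{-\frac{p-1}{2},\ldots,\frac{p-1}{2}\}$, $a_0\ne 0$; then $b_0=a_0$ and $\overline{\alpha}-b_0=-\sqrt{D}-b_0=-2a_0+a_1'p+\ldots$ has valuation $0$, because $p$ is odd and $0<|2a_0|<p$ force $p\nmid 2a_0$. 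Hence $v_p(\overline{\alpha}_1)=0$, and Theorem \ref{LengthStar} applies to $\alpha_1$: either $\alpha_1$ is purely periodic, so $h-1=0$ and the preperiod of $\sqrt{D}$ has length $1$; or $h-1\ge 1$, which cannot be even, so $h$ is even. Either way we are done when $v_p(\sqrt{D})=0$.

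It remains to treat $v_p(\sqrt{D})=m\ge 1$ (this does occur, e.g. $\sqrt{63}=3\sqrt{7}\in\mathbb{Q}_3$): then $b_0=s(\sqrt{D})=0$, so $\alpha_1=1/\sqrt{D}$ and $\overline{\alpha}_1=-1/\sqrt{D}$ both have valuation $-m$, and Theorem \ref{LengthStar} no longer applies directly. Here I would shift by one more step. The rule for odd-indexed quotients of \textit{Browkin II} forces $v_p(\alpha_1-b_1)=0$, hence $v_p(\alpha_2)=0$; and since negation acts digit-by-digit on balanced $p$-adic expansions, $t(-x)=-t(x)$, from which $\overline{\alpha}_1-b_1=(\overline{\alpha}_1-t(\alpha_1))+\varepsilon$ with $\varepsilon\in\{0,\pm 1\}$ and $v_p(\overline{\alpha}_1-t(\alpha_1))=-m$, so $v_p(\overline{\alpha}_2)=m>0$. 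Thus $\alpha_2=[b_2,b_3,\ldots]$ is a periodic \textit{Browkin II} expansion with $|\alpha_2|_p=1$ and $|\overline{\alpha}_2|_p<1$, and Theorem \ref{LengthII} forbids an odd preperiod for it; since the preperiod of $\sqrt{D}$ equals $2$ plus that of $\alpha_2$, it is even.

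The step I expect to be the main obstacle is condition $ii)$ on the conjugate: pinning down the constant term of $\sqrt{D}$ and using $2b_0\not\equiv 0\pmod p$, which is exactly where the oddness of $p$ is used. Closely related is the bookkeeping that turns a prescribed tail of a \textit{Browkin II} expansion into a \textit{Browkin II*} (or again a \textit{Browkin II}) expansion so that Theorems \ref{LengthStar} and \ref{LengthII} become applicable, and the degenerate case $v_p(\sqrt{D})>0$, which is what forces one to be prepared to shift by two steps rather than one.
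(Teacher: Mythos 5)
Your proof is correct and, in the main case $v_p(\sqrt{D})=0$, it is essentially the paper's own argument: rule out pure periodicity via Theorem \ref{Bedo1}, pass to the tail $\alpha_1$, verify $v_p(\alpha_1)<0$ and $v_p(\overline{\alpha}_1)=0$ using $p\nmid 2b_0$, and apply Theorem \ref{LengthStar} to the \textit{Browkin II*} expansion of $\alpha_1$, reading ``the preperiod cannot be even'' as excluding only positive even preperiods (exactly as the paper does). The genuine difference is your separate treatment of $v_p(\sqrt{D})>0$, i.e. $p\mid D$ and $b_0=0$: the paper's step ``$v_p(\overline{\alpha}-b_0)=0$ since $2b_0\neq 0$'' tacitly assumes $b_0\neq 0$ and does not cover this case, whereas your two-step shift to $\alpha_2$, with $v_p(\alpha_2)=0$ and $v_p(\overline{\alpha}_2)=v_p(\sqrt{D})>0$, followed by Theorem \ref{LengthII} (and Theorem \ref{Bedo1star} to exclude $h=1$ there), handles it correctly and still gives an even preperiod. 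In this respect your write-up is more complete than the published proof.
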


\section{Some periodic expansions}
\label{sec:perExp}

In \cite{BII}, Browkin characterized some expansions for square roots of integers provided by \textit{Browkin II} that have period $2$ and $4$. Through this construction, he proved the existence of infinitely many square roots of integers with periodic expansion of period $2$, similarly to what Bedocchi proved in \cite{BEII} for \textit{Browkin I}.\\
However, Browkin was not able to prove that the expansions having period of length $4$ that he provided were infinitely many, leaving open the problem of proving the existence of infinitely many square roots of integers having a periodic \textit{Browkin II} expansion with period of length $4$. Here we prove this result by constructing, for each prime $p$, an infinite class of square roots of integers that have a periodic \textit{Browkin II} continued fraction with the period of length $4$ in $\mathbb{Q}_p$. 

\begin{Theorem}
Given $D = \cfrac{1 - p^t}{(1-p)^2} \cdot p^2$, for any integer $t \geq 2$, then 
\[ \pm\sqrt{D} = \left[0, \pm \frac{1}{p}, \overline{\mp 1, \mp \frac{2(p^{t-1}-1)}{(p-1)p^{t-1}}, \mp 1, \pm \frac{2}{p}}\right]. \]
\end{Theorem}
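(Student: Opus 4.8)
The plan is to verify the claimed expansion directly, by running the \emph{Browkin II} algorithm symbolically on $\alpha_0 = \sqrt{D}$ and confirming that after the prescribed preperiod one reaches a complete quotient that already appeared, so that periodicity sets in with period $4$. First I would record the basic data of $D$: from $D = \frac{1-p^t}{(1-p)^2}p^2$ one computes $v_p(D) = 2$, so $v_p(\sqrt D) = 1$ and $b_0 = s(\sqrt D) = 0$; this gives $\alpha_1 = \frac{1}{\sqrt D}$, and since $v_p(\alpha_1) = -1$ with $\alpha_1 = \pm\frac{1}{p}\cdot(\text{unit})$ one checks $t(\alpha_1) = \pm\frac1p$ (the constant term of $D/p^2 = \frac{1-p^t}{(1-p)^2}$ modulo $p$ being $1$, since $\frac{1}{(1-p)^2}\equiv 1$ and $1-p^t\equiv 1$), and that $v_p(\alpha_1 - t(\alpha_1)) = 0$, so $b_1 = \pm\frac1p$. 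I would keep the sign bookkeeping explicit throughout: choosing the branch $\alpha_0 = +\sqrt D$ forces a definite sign for each $a_0$-term, and the statement for $-\sqrt D$ follows by the symmetry $\alpha \mapsto -\alpha$, which commutes with the whole algorithm (both $s$, $t$ and $\mathrm{sign}$ are odd functions and $\alpha \mapsto 1/(\alpha-b)$ respects the flip), so I would prove one branch and invoke this symmetry for the other.

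The core of the argument is the four-step cycle. Set $\beta = \alpha_2$; the claim is that $\beta = [\overline{\mp 1,\ \mp\frac{2(p^{t-1}-1)}{(p-1)p^{t-1}},\ \mp 1,\ \pm\frac2p}]$, i.e. $\beta$ is purely periodic of period $4$. The cleanest route is to \emph{guess} the algebraic value of $\beta$ from the purported periodic tail: if $\beta = [\overline{c_0,c_1,c_2,c_3}]$ then $\beta = \frac{\beta A_3 + A_2}{\beta B_3 + B_2}$ where $A_i/B_i$ are the convergents of $[c_0,c_1,c_2,c_3]$, giving a quadratic $B_3\beta^2 + (B_2 - A_3)\beta - A_2 = 0$. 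I would compute the convergents of $[\mp 1, \mp\frac{2(p^{t-1}-1)}{(p-1)p^{t-1}}, \mp 1, \pm\frac2p]$ explicitly (a short finite computation with four entries), obtain this quadratic, and check that its roots are exactly $\alpha_2 = \frac{1}{\alpha_1 - b_1}$ and its conjugate — equivalently, that $\alpha_2 = \frac{1}{1/\sqrt D - (\pm 1/p)} = \frac{p\sqrt D}{p \mp \sqrt D}$ satisfies it. Then, separately, I must check that running \emph{Browkin II} on this $\beta$ actually \emph{produces} the partial quotients $c_0,c_1,c_2,c_3$ in this order — that is, that $s(\beta) = \mp 1$, that $t(\alpha_3) = \mp\frac{2(p^{t-1}-1)}{(p-1)p^{t-1}}$ with the valuation condition selecting the plain-$t$ branch (no $\mathrm{sign}$ correction), that $s(\alpha_4) = \mp 1$, and that $t(\alpha_5) = \pm\frac2p$ with again no $\mathrm{sign}$ correction, and finally $\alpha_6 = \beta$. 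For the $s$-steps I need the constant-and-nonnegative part of the relevant $p$-adic expansion; for the $t$-steps I need the negative part, which amounts to computing $\alpha_3, \alpha_5$ modulo $\mathbb Z_p$ and checking they lie in $K_p$ with the stated value.

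The main obstacle, and where the real work sits, is the explicit $p$-adic arithmetic at the two odd steps $n=3$ and $n=5$: one must compute $\alpha_3 = \frac{1}{\beta - (\mp 1)} = \frac{1}{\alpha_2 \pm 1}$ and $\alpha_5$ as elements of $\mathbb Q_p$ in closed form (rational functions of $\sqrt D$), then extract their ``fractional parts'' $t(\alpha_3), t(\alpha_5)$. The delicate point is verifying $v_p(\alpha_n - t(\alpha_n)) = 0$ at $n=3,5$ so that the \emph{second} clause of \eqref{Br2} fires rather than the third — this is precisely the phenomenon flagged in Example~\ref{ExaExp}, and it is what makes the period genuinely $4$ rather than something longer; getting the sign and the exact numerator $\frac{2(p^{t-1}-1)}{(p-1)p^{t-1}} = \frac{2}{p} + \frac{2}{p^2} + \cdots + \frac{2}{p^{t-1}}$ right requires care with the geometric-series identity $\frac{1-p^t}{(1-p)^2} = \sum_{j\ge 0}(j+1)p^j$ truncated appropriately. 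Once all four partial quotients and the return $\alpha_6 = \alpha_2$ are confirmed, periodicity of period $4$ with preperiod $[0,\pm\frac1p]$ is immediate, and the sign-flip symmetry closes the case $-\sqrt D$. I would also note in passing that $D$ is not a perfect square (e.g. $v_p(D)=2$ but $D/p^2 = \frac{1-p^t}{(1-p)^2}$ is $\equiv 1 \bmod p$ and a quick size/parity check rules out squareness), so $\sqrt D \notin \mathbb Q$ and the expansion is genuinely infinite.
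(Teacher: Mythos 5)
Your plan coincides with the paper's own strategy: run \emph{Browkin II} symbolically on $\sqrt D$, get $b_0=0$ and $b_1=\pm\frac1p$, then track the complete quotients $\alpha_2,\dots,\alpha_5$ as closed-form expressions in $\sqrt D$, verify the four claimed partial quotients, and close the cycle with $\frac{1}{\alpha_5-b_5}=\alpha_2$. Your extra step (deriving the quadratic satisfied by $[\overline{c_0,c_1,c_2,c_3}]$ from its convergents and checking that $\alpha_2$ is a root) is redundant once you verify that the algorithm applied to $\alpha_2$ reproduces $c_0,\dots,c_3$ and returns to $\alpha_2$, but it is harmless; and your sign-flip symmetry for the $-\sqrt D$ branch is valid, since $s$, $t$ and $sign$ are odd and the valuation condition in \eqref{Br2} is invariant under negation, whereas the paper simply carries the $\pm$ signs along.

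The gap is that the proposal stops exactly where the proof begins: everything the theorem asserts lives in the digit-level computations you explicitly defer. The paper's proof imports from Bedocchi the explicit expansion $\sqrt D = p(1+p+\cdots+p^{t-1})+Ap^{t+1}$ with the digits of $A$ written out (split according to the parity of $t$ and $p=4k\pm1$), and uses it to (i) check that $v_p(\alpha_n-t(\alpha_n))=0$ at the odd steps, so the sign-correction branch of \eqref{Br2} never fires (the phenomenon of Example~\ref{ExaExp} that you rightly flag); (ii) read off $s(\alpha_2)=-1$ and $s(\alpha_4)=-1$ from the constant digits of $\frac{1}{p^{t-1}+p-2}$ and $\frac{-1}{1+Ap}$; and (iii) — the only genuinely delicate point — prove that $\frac1B=2+Cp^t$ with $v_p(C)=0$, where $B=1-A+Ap$, which is exactly what forces $t(\alpha_3)=-2\left(\frac{1}{p^{t-1}}+\cdots+\frac1p\right)=-\frac{2(p^{t-1}-1)}{(p-1)p^{t-1}}$ rather than some other element of $K_p$; this is established by showing $1-2B=p^{t+1}+\cdots$. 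You correctly locate the difficulty at $n=3,5$, but locating it does not discharge it: without carrying out these expansions (or an equivalent argument) the claimed values of $b_3$ and $b_5$, the absence of sign corrections, and the closing identity $\alpha_6=\alpha_2$ remain unverified, so what you have is a correct plan, not yet a proof.
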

\begin{proof}
In the following, we suppose $p = 4k + 1$, the proof for the case $p = 4k - 1$ is similar.\\
From \cite[Eq. 2.1]{BEII}, it follows that
\[\sqrt{D} = p(1 + p + \ldots + p^{t-1}) + Ap^{t+1},\]
where
\[ A = -\frac{p-1}{2} + p -\frac{p-1}{2} p^2 + p^3 - \ldots - \frac{p-1}{2}p^{t-1} + A' p^t, \]
supposing $t$ odd (a similar result holds for $t$ even).
Thus, considering $\alpha_0 = \sqrt{D}$, we immediately get $b_0 = s(\alpha_0) = 0$. Applying \emph{Browkin II}, we obtain
\[ \alpha_1 = \cfrac{\sqrt{D}}{D} = \cfrac{1}{p} - 1 + \cfrac{A p^{t+1}}{q\cdot p^2}, \]
where $q = \cfrac{1-p^t}{16k^2}$, and so $b_1 = t(\alpha_1) = \cfrac{1}{p}$.
The next complete quotient is
\[\alpha_2 = \cfrac{\sqrt{D} + qp}{1-q} = \cfrac{(2 - p)(1 - p^t) + A (p-1)^2 p^t}{p^{t-1}+p-2}.\]
Since, 
\[\cfrac{1}{p^{t-1}+p-2} = \cfrac{p-1}{2} + \ldots\]
we have $b_2 = s(\alpha_2) = -1$. In the next step, we have 
\begin{align*}
\alpha_3 &= \cfrac{p-\sqrt{D}}{p-\sqrt{D}+p\sqrt{D}} = - \cfrac{p^2 + \ldots + p^t + Ap^{t+1}}{(1-A+Ap)p^{t+1}} =\\
&= -\cfrac{1}{B}\left( \cfrac{1}{p^{t-1}}+ \ldots + \cfrac{1}{p} \right) - \cfrac{A}{B},
\end{align*}
where $B = 1 - A + Ap$ and $v_p(A) = v_p(B) = 0$. Now, we prove that $\cfrac{1}{B} = 2 + Cp^t$ for some $C$ such that $v_p(C) = 0$, in this way it follows that
\[ b_3 = t(\alpha_3) = -2\left( \cfrac{1}{p^{t-1}} + \ldots + \cfrac{1}{p} \right) = - \cfrac{p^{t-1} - 1}{2kp^{t-1}}, \]
considering that $4k = p-1$. For proving $\cfrac{1}{B} = 2 + Cp^t$, first of all we can observe that
\begin{align*}
B=&1 + \left(\cfrac{p-1}{2} - p + \cfrac{p-1}{2}p^2 - \ldots +\cfrac{p-1}{2} p^{t-1} + \ldots\right)+ \\
&+ \left(-\cfrac{p-1}{2} p + p^2 - \cfrac{p-1}{2}p^3 + \ldots - \cfrac{p-1}{2} p^{t} + \ldots\right). 
\end{align*}
Using that $1 + \cfrac{p-1}{2} = - \cfrac{p-1}{2} + p$, we obtain
\[ B = -\cfrac{p-1}{2} - \cfrac{p-1}{2} p - \ldots - \cfrac{p-1}{2}p^t + \ldots \]
Thus we surely have that $\dfrac{1}{B} = 2 + \ldots$ and we want to prove that $v_p\left(\cfrac{1}{B} - 2\right) \geq t+1$. For proving this, it is sufficient to observe that
\[ 1 - 2B = p^{t+1} + \ldots \]
Continuing to apply \emph{Browkin II}, with some calculations, we get
\[\alpha_4 = \cfrac{p^t(p-1)}{p^t - p - (p-1)\sqrt{D}}.\]
Exploiting the previous results, we have
\[\alpha_4 = -\cfrac{1}{1+Ap} \]
and considering that $\cfrac{-1}{1+Ap} = -1 +\ldots$, we obtain $b_4 = -1$. For the next step, we have
\[ \alpha_5 = \cfrac{p-p^t+(p-1)\sqrt{D}}{p-p^{t+1}+(p-1)\sqrt{D}} = 1+ \cfrac{1}{Ap} = \cfrac{2}{p} - 1 + \ldots\]
from which $b_5 = t(\alpha_5) = \cfrac{2}{p}$. Finally, one can check that \[\cfrac{1}{\alpha_5 - b_5} = \alpha_2,\]
and the  thesis follows.
\end{proof}

The previous theorem proves that there are infinitely many integers whose square roots have a periodic $p$--adic continued fraction expansion with period 4 by means of \textit{Browkin II}. Indeed, there are infinitely many $t$ such that $\cfrac{1-p^t}{16k^2}$, with $p = 4k \pm 1$, is an integer.

\section{Conclusions}
In this paper we have mainly analyzed the periodicity of \textit{Browkin II} that, at the state of the art, is the most close to a standard algorithm for continued fractions over the real field, in terms of its properties regarding finiteness and periodicity (since, as Browkin observed \cite{BII}, it appears to provide more periodic expansion for quadratic irrationals than \textit{Browkin I}).\\
In Section \ref{sec:perBro} we have found a necessary condition for the pure periodicity, that turns out to be not sufficient in general, in contrast on what Bedocchi proved for \textit{Browkin I} \cite{BEI}. On this purpose, Theorem \ref{LengthII} gives conditions to obtain the pure periodicity of the expansion in most cases. The motivations for the existence of rare exceptions are underlined in Remark \ref{pato} and Example \ref{ExaExp} and, in some sense, they implicitly characterize the pure periodicity of \textit{Browkin II}. It would be of great interest to find a full explicit characterizations for the $p$--adic numbers having a pure periodic \textit{Browkin II} continued fraction. 
Regarding the preperiod, Bedocchi proved in \cite{BEI}, that the lengths of the preperiods of \textit{Browkin I} expansions of square roots of integers could only be $2$ or $3$ and gave explicit conditions on when this happens. In the future, we aim to deepen the study of the preperiods for \textit{Browkin II} algorithm, trying to give more detailed conditions to distinguish the case of each possible preperiod, in view of \cref{prop:preperiod}.
\\
In Section \ref{sec:perExp} we have obtained infinitely many square roots of integers that have a periodic expansion by means of \textit{Browkin II} with period length $4$, solving the problem left open by Browkin in \cite{BII} and generalizing what he obtained in the same paper for the period length $2$. Moreover, through some experimental results, we believe that is possible to extend the result for all even period lengths. This computational result may be summarized in the following conjecture:
\begin{Conjecture}
For all even $h\in\mathbb{Z}$, there exist infinitely many $\sqrt{D}$, $D\in\mathbb{Z}$ not perfect square, such that \textit{Browkin II} continued fraction of $\sqrt{D}$ is periodic with period of length $h$.
\end{Conjecture}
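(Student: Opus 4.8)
The plan is to promote the constructive argument of the preceding period-four theorem to a construction that is uniform in the period length $h = 2m$. First I would search for a family $D = D_{m,t}$ of integers, parametrized by an integer $t$ (to be sent to infinity) with $m$ fixed, whose square root in $\mathbb{Q}_p$ has a highly structured $p$--adic expansion. The natural ansatz generalizes $D = \frac{1-p^t}{(1-p)^2}\,p^2$: using a Bedocchi-type expansion (\cite[Eq. 2.1]{BEII}) one would arrange that $\sqrt{D_{m,t}}$ begins with a geometric initial segment of controlled length before an arbitrary tail $A\,p^{N}$, the length of this segment being tuned so that the algorithm produces exactly $m$ distinct blocks of complete quotients before repeating. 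The parameter $t$ governs where the tail starts (hence the infinitude), while $m$ fixes how many blocks the structured part forces.

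Second, I would trace \textit{Browkin II} through this expansion exactly as in the period-four proof, computing the complete quotients $\alpha_0,\alpha_1,\ldots$ together with their $s$-- or $t$--values and the relevant $p$--adic valuations. The target is a self-map of the complete-quotient sequence: after a short preperiod (of length two, as in the period-four construction, compatible with \cref{prop:preperiod}), the quotients should satisfy $\alpha_{j+2m}=\alpha_j$ at the first index $j=2$ of the period. Concretely, the crux is the cycle-closing identity $\frac{1}{\alpha_{h+1}-b_{h+1}}=\alpha_2$, the direct analogue of the relation $\frac{1}{\alpha_5-b_5}=\alpha_2$ that closes the period in the $h=4$ case.

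Third, I would verify that the minimal period is exactly $h$ and not a proper even divisor of $h$; this reduces to checking that no two of the $m$ blocks coincide in a way that would collapse the cycle, which the structured form of $D_{m,t}$ should guarantee by keeping the $t$--partial quotients of distinct blocks numerically distinct in $K_p$. Finally, infinitude follows as in the period-four theorem: one isolates an integrality condition on $t$ (generalizing $\frac{1-p^t}{16k^2}\in\mathbb{Z}$) that holds for infinitely many $t$, and observes that $D_{m,t}$ is a non-square for all but finitely many such $t$.

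The hard part is the second step carried out uniformly in $m$. In the period-four proof the decisive simplifications—such as $\frac{1}{B}=2+Cp^t$ with $v_p(C)=0$, and $v_p(1-2B)\geq t+1$—are delicate facts about four specific complete quotients with no evident extension. For general $m$ there is no obvious closed form for the intermediate quotients, and the combinatorics of the alternating $s$-- and $t$--steps, together with the occasional $\mathrm{sign}$--correction that \textit{Browkin II} applies at odd indices (the phenomenon isolated in \cref{pato} and \cref{ExaExp}), grows rapidly with $m$. The decisive obstacle is therefore to find an ansatz $D_{m,t}$ transparent enough that the $2m$ complete quotients admit a single uniform description, so that the cycle-closing identity can be established by one induction on the blocks rather than $2m$ separate hand computations; this is precisely why the statement is recorded here as a conjecture rather than a theorem.
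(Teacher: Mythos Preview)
The statement you are addressing is recorded in the paper as a \emph{Conjecture}, not as a theorem: the paper offers no proof, only the remark that computational evidence suggests the period-four construction should extend to every even period length. There is therefore no proof in the paper to compare your proposal against.

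Your write-up is not a proof either, and you say so yourself in the final sentence. What you have produced is a reasonable heuristic outline---generalize the ansatz $D=\frac{1-p^t}{(1-p)^2}p^2$ to a two-parameter family $D_{m,t}$, trace \textit{Browkin II} through a structured initial segment of $\sqrt{D_{m,t}}$, and close the cycle with an identity $\alpha_{2+2m}=\alpha_2$---together with an honest assessment of where the difficulty lies (no closed form for the intermediate complete quotients, the $\mathrm{sign}$-correction at odd steps, and the need for a uniform induction rather than $2m$ ad hoc computations). That diagnosis matches the paper's own reasons for leaving the statement as a conjecture. As a submission purporting to \emph{prove} the conjecture, however, it has a genuine gap: no candidate family $D_{m,t}$ is actually exhibited, and none of the steps (the block structure, the cycle-closing identity, minimality of the period, infinitude in $t$) is carried out for a single $m>2$.
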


\end{document}